\numberwithin{equation}{section}
\theoremstyle{plain}
\newtheorem{maintheorem}{Theorem}
\newtheorem{theorem}{Theorem}[section]
\newtheorem{lemma}[theorem]{Lemma}
\newtheorem{definition}[theorem]{Definition}
\newtheorem{question}{Question}
\theoremstyle{remark}
\begin{document}
\title[A relation between entropy and transitivity of Anosov diffeomorphisms]
{A relation between entropy and transitivity of Anosov diffeomorphisms}
\author[F. Micena]{Fernando Pereira Micena}
\address{Instituto de Matem\'{a}tica e Computa\c{c}\~{a}o,
  IMC-UNIFEI, Itajub\'{a}-MG, Brazil.}
\email{fpmicena82@unifei.edu.br}


\baselineskip=18pt              


\begin{abstract}
It is known that transitive Anosov diffeomorphisms have a unique measure of maximal entropy (MME). Here we discuss the converse question. Under suitable hypothesis on Lyapunov exponents on the set of periodic points and the structure of the MME we get transitivity of  $C^1-$Anosov diffeomorphisms.
\end{abstract}

\subjclass[2010]{}
\keywords{}

\maketitle


\section{Introduction}\label{sec:intro}

Let $(M,g)$ be a $C^{\infty}$ compact, connected and boundaryless Riemannian manifold and $f:M \rightarrow M$ be a $C^1-$diffeomorphism. We say that $f$ is an Anosov diffeomorphim if there are numbers $0 < \beta < 1 < \eta, C > 0$ and a $Df-$invariant continuous splitting $T_xM = E^u_f(x) \oplus E^s_f(x),$ such that for any $n \geq 0,$
$$ ||Df^n(x) \cdot v || \geq \frac{1}{C} \eta^n ||v||, \forall v \in E^u_f(x), \quad ||Df^n(x) \cdot v || \leq C \beta^n ||v||, \forall v \in E^s_f(x).$$

It is known that the bundles $E^s_f, E^u_f$ are  respectively integrable to invariant  stable and unstable foliations denoted $\mathcal{F}^s_f$ and $\mathcal{F}^u_f.$  Given a point $x \in M,$ the stable and unstable leaves that contain $x$ are respectively characterized by
$$W^s_f(x) = \{ y \in M| \; \displaystyle\lim_{n \rightarrow +\infty}d(f^n(x) , f^n(y)) = 0  \},$$
$$W^u_f(x) = \{ y \in M| \; \displaystyle\lim_{n \rightarrow +\infty}d(f^{-n}(x) , f^{-n}(y)) = 0  \}.$$
If $f$ is $C^r, r\geq 1,$ then $W^{\ast}_f(x), \ast \in \{s,u\}$ are embedded $C^r$ submanifolds of $M.$

From now on we denote by $m$ the probability Lebesgue measure on $M$ induced by $g,$ and $Per(f)$ the set of periodic points of a diffeomorphism $f .$

Anosov diffeomorphims play an important role in the theory of dynamical systems and this class of diffeomorphisms satisfies many rich dynamical properties, as shadowing and closing lemmas and, in the case that the Anosov diffeomorphism is $C^2$ and preserves a measure $\mu $ absolutely continuous, this measure is ergodic.  A step towards  classifying Anosov diffeomorphisms, up to topological conjugacy, would consist in showing that every Anosov diffeomorphism is transitive.

\begin{definition}
Let $(X,d)$ be a compact metric space and $f: X \rightarrow X$ a continuous function. We say that $f$ is transitive if for any nonempty open sets $U$ and $V$ there exists an integer $N$ such that $f^{-N}(V) \cap U \neq \emptyset,$ or equivalently, there exists a point $x \in X$ with dense orbit.
\end{definition}

There are examples of non transitive Anosov flows, see  \cite{FW}, but there are no known examples of non transitive Anosov diffeomorphisms.

For Anosov diffeomorphisms, since they are Axiom A diffeomorphisms, transitivity property is equivalent to $\Omega(f) = M,$ where $\Omega(f)$ is the non wandering set of $f.$
Under transitivity assumption, Anosov diffeomorphisms have a unique measure of maximal entropy, see \cite{Bowen}, for instance. We could ask about the converse.

\begin{question}\label{quest}
Given $f: M \rightarrow M$ an Anosov diffeomorphism having unique measure of maximal entropy, is $f$ transitive ?
\end{question}

In general this question is totally inconclusive. Note that, in case  of a possible non transitive Anosov diffeomorphism,   could occur  $\Omega(f) = \Omega_1 \cup \ldots \cup \Omega_s, s > 1,$ a union of distinct basic sets with mutually different topological entropies, so $f$ could have a unique measure of maximal entropy but would not be transitive.  We could ask under what conditions a Anosov diffeomorphism having a unique measure of maximal entropy is transitive.

In this work we present a partial answer to the above Question \ref{quest} connecting transitivity, volume growth of unstable leaves, Lyapunov exponents and some regularity of the measure of maximal entropy.

\begin{maintheorem}\label{t1}
If $f: M \rightarrow M$ is a $C^1-$Anosov diffeomorphism such that $p \mapsto \Lambda^u_f(p)$ is constant on $Per(f),$ then $h_{top}(f) = \Lambda^u_f,$ where $\Lambda^u_f$ here denotes the common value $\Lambda^u_f(p), p \in Per(f).$ Additionally, suppose that  $f$ has
a unique measure of maximal entropy, $\mu.$ If $\mu$ is an absolutely continuous measure with continuous density with respect to $m,$  then  $f$ is  transitive.
\end{maintheorem}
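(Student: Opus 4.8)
The plan is to work throughout with the continuous function $\phi(x)=\log\bigl|\det\bigl(Df|_{E^u_f(x)}\bigr)\bigr|$, whose Birkhoff average over the orbit of a periodic point $p$ is exactly $\Lambda^u_f(p)$. The first step is to translate the hypothesis that $p\mapsto\Lambda^u_f(p)$ is constant on $\Per(f)$ into a statement about all invariant measures. Since $f$ is Anosov, the Anosov closing lemma guarantees that the periodic orbit measures are weak-$*$ dense in the space $\mathcal M(f)$ of $f$-invariant Borel probabilities, and each such measure integrates $\phi$ to the common value $\Lambda^u_f$; by continuity of $\nu\mapsto\int\phi\,d\nu$ this forces $\int\phi\,d\nu=\Lambda^u_f$ for every $\nu\in\mathcal M(f)$. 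A short compactness argument with empirical measures then upgrades this to the uniform convergence $\tfrac1n S_n\phi\to\Lambda^u_f$, where $S_n\phi=\sum_{k=0}^{n-1}\phi\circ f^k$: otherwise a sequence of empirical measures would accumulate on an invariant measure with $\int\phi\,d\nu\ne\Lambda^u_f$.

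For the identity $h_{top}(f)=\Lambda^u_f$ I would prove the two inequalities separately. The upper bound is Ruelle's inequality, valid for $C^1$ maps: for any ergodic $\nu$ one has $h_\nu(f)\le\int\sum_{\lambda_i>0}\lambda_i\,d\nu=\int\phi\,d\nu=\Lambda^u_f$, since on an Anosov system $E^u_f$ is precisely the sum of the Oseledets spaces with positive exponent; the variational principle then gives $h_{top}(f)\le\Lambda^u_f$. For the lower bound I would use the growth of unstable volume: fixing a disk $D$ in an unstable leaf, $\vol(f^nD)=\int_D e^{S_n\phi}\,d\vol$, so the uniform convergence above yields $\tfrac1n\log\vol(f^nD)\to\Lambda^u_f$. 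A standard separated-set argument then bounds entropy below by this growth rate: an $\varepsilon$-net of $f^nD$ pulls back under $f^{-n}$ to an $(n,\varepsilon)$-separated set in $D$ of cardinality $\gtrsim \vol(f^nD)/\varepsilon^{\dim E^u_f}$, whence $h_{top}(f)\ge\Lambda^u_f$. Combining the two gives the first conclusion.

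For transitivity, let $\mu$ be the unique measure of maximal entropy; being unique it is ergodic. By the first part, $h_\mu(f)=h_{top}(f)=\Lambda^u_f=\int\phi\,d\mu$, so $\mu$ realizes equality in Ruelle's inequality and is SRB-like; the decisive point, however, is purely topological. Suppose $f$ were not transitive, which by the equivalence recalled in the introduction means $\Omega(f)\subsetneq M$. As $\mu$ is $f$-invariant, Poincar\'e recurrence gives $\mathrm{supp}(\mu)\subseteq\Omega(f)$. On the other hand, writing $\mu=\rho\,dm$ with $\rho$ continuous and $\int\rho\,dm=1$, the set $\{\rho>0\}$ is open and nonempty, so $\mathrm{supp}(\mu)$ has nonempty interior. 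Thus $\Omega(f)$ contains a nonempty connected open set $U$, and the spectral decomposition $\Omega(f)=\Omega_1\sqcup\cdots\sqcup\Omega_s$ into disjoint basic sets forces $U\subseteq\Omega_{i_0}$ for a single $i_0$. A basic set with nonempty interior must be open: by the local product structure its transverse Cantor structure is trivial at an interior point, hence everywhere on the transitive piece. Then $\Omega_{i_0}$ is open and closed, so connectedness of $M$ gives $\Omega_{i_0}=M$, whence $\Omega(f)=M$, contradicting non-transitivity. Therefore $f$ is transitive.

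The main obstacle is the lower bound in the entropy identity inside the $C^1$ category: in low regularity one cannot invoke Pesin's entropy formula or absolute continuity of the unstable holonomies, which is why I would route it through the elementary separated-set and volume-growth estimate, using only that $f$ is a homeomorphism uniformly expanding $E^u_f$. The other sensitive ingredient is the density of periodic measures together with the resulting uniform convergence of $\tfrac1n S_n\phi$, where the Anosov closing lemma is essential; and in the transitivity step one must justify that a basic set with nonempty interior exhausts $M$, which rests on the local product structure of basic sets. It is worth emphasizing that continuity of the density is exactly what prevents $\mathrm{supp}(\mu)$ from being a positive Lebesgue-measure set of empty interior, and is thus indispensable to the final contradiction.
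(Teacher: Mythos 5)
Your first part (the identity $h_{top}(f)=\Lambda^u_f$) follows essentially the paper's route: closing lemma to pin down the integral of $\log J^uf$ against invariant measures, a compactness argument for uniform convergence of Birkhoff averages (this is the paper's Lemma 3.1, quoted from Alves--Ara\'ujo--Saussol and Cao), Ruelle's inequality plus the variational principle for the upper bound, and unstable volume growth for the lower bound. Two caveats. First, your claim that periodic orbit measures are weak-$*$ dense in the space of \emph{all} invariant measures is false for a possibly non-transitive Anosov diffeomorphism: a convex combination of measures carried by different basic sets is not approximable by single periodic measures. What you actually need, and what is true, is that every \emph{ergodic} invariant measure integrates $\log J^uf$ to $\Lambda^u_f$ (closing lemma applied at a regular, recurrent, generic point, exactly as in the paper), followed by ergodic decomposition. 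Second, your separated-set argument for $h_{top}(f)\geq\Lambda^u_f$ elides the folding problem: points of $f^nD$ that are $\varepsilon$-separated in the intrinsic leaf metric need not be $\varepsilon$-separated in the ambient metric (the leaf can fold back on itself), while ambient $\varepsilon$-separation gives no volume lower bound on cardinality. The standard repair pulls each pair back to the first time its intrinsic distance drops below the local product scale, where intrinsic and ambient distances are comparable; the paper sidesteps this by invoking the Hu--Hua--Wu theorem ($h_{top}(f,\mathcal{F}^u)=\chi_{\mathcal{F}^u}(f)$ and $h_{top}(f)\geq h_{top}(f,\mathcal{F}^u)$). Both issues are repairable, so the first part is correct in substance.

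The transitivity part is where you genuinely diverge from the paper, and where the real gap lies. The paper argues by contradiction \emph{through uniqueness of the MME}: if a nonempty open $U$ had $\mu(U)=0$, continuity of $\rho$ forces $U\subset H_0=\rho^{-1}(0)$, hence $U$ lies in the compact invariant set $\Lambda=\bigcap_n f^{-n}(H_0)$; since $\Lambda$ contains an unstable disc, the volume-growth argument gives $h_{top}(f|_\Lambda)=h_{top}(f)$, and expansiveness of $f|_\Lambda$ yields a second measure of maximal entropy supported on $\Lambda$, mutually singular with $\mu$ --- contradiction. Your argument never uses uniqueness of the MME (nor, in this part, the exponent hypothesis): from $\mathrm{supp}(\mu)\subseteq\Omega(f)$ and $\{\rho>0\}$ open you get a basic set $\Omega_{i_0}$ with nonempty interior, and then assert that such a set ``must be open,'' hence equals $M$. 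That assertion is true, but it is not the one-line observation you make of it: it is precisely the folklore theorem proved by T.~Fisher (\emph{Hyperbolic sets with nonempty interior}, DCDS, 2006) that a transitive hyperbolic set with nonempty interior is the whole manifold. What transitivity gives cheaply is only that $\mathrm{int}(\Omega_{i_0})$ is \emph{dense} in $\Omega_{i_0}$ (the interior is open, invariant and nonempty); density does not upgrade to openness, because the radius of the ball inside $\Omega_{i_0}$ around a nearby interior point can degenerate as one approaches a given $z\in\Omega_{i_0}$, and the local product structure only identifies $\Omega_{i_0}$ near $z$ with $(\Omega_{i_0}\cap W^u_{\mathrm{loc}}(z))\times(\Omega_{i_0}\cap W^s_{\mathrm{loc}}(z))$; to make $z$ interior one needs full local stable and unstable discs \emph{through $z$ itself} inside the set, which Fisher obtains via a periodic interior point and a genuine invariant-manifold argument. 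So, as a self-contained proof, this step is a gap. If you are allowed to cite Fisher's theorem, your argument is valid and in fact proves more than the paper does --- any $C^1$ Anosov diffeomorphism of a connected manifold preserving \emph{some} probability measure with continuous density is transitive, with no MME hypothesis at all --- but you must either quote that theorem precisely or prove it; the ``transverse Cantor structure'' sentence does neither.
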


In Theorem \ref{t1}, the value $\Lambda^u_f(p)$ denotes the sum of all positive Lyapunov exponents at the periodic point $p.$ Analogously, $\Lambda^s_f(p),$ denotes the sum of all negative Lyapunov exponents at the periodic point $p.$

\section{Comments and Applications}

In Theorem \ref{t1} we are not presuming that the density $ \rho :=\frac{d\mu}{dm}$ is positive. If that were the case, the result would be trivial, since $\mu$ is ergodic.

We observe that when the regularity of an Anosov diffeomorphism $f$ is only $C^1,$ such diffeomorphism not necessarily satisfies the absolute continuity of the unstable and stable foliations, see \cite{RY}. The absolute continuity of the unstable and stable foliations ever holds in $C^2$ context, it is the main ingredient in the proof of ergodicity of $C^2$ conservative Anosov diffeomorphisms.  In $C^2$ setting if $f$ is an Anosov diffeomorphism   preserving an absolute continuous measure with respect to $m,$ we can conclude $\Omega(f) = M,$ see \cite{BR}.  In contrast with this, for $C^1$ Anosov diffeomorphisms, Bowen in \cite{BoInv} constructed an example of Axiom $A$ diffeomorphism with a basic set $\Omega_s,$ with empty interior and such that $m(\Omega_s) > 0.$

Theorem \ref{t1} allows us to study the topological entropy for a class of $C^1$ pertur-\\bations of Anosov diffeomorphisms that act as identity on a invariant direction. These perturbations have special importance in the problems related to increasing or decreasing of center Lyapunov exponents in partially hyperbolic context.

Consider $A: \mathbb{T}^3  \rightarrow  \mathbb{T}^3  $ a linear Anosov automorphism such that $T\mathbb{T}^3 = E_A^{ss} \oplus E_A^{s} \oplus E_A^u $ and the corresponding eigenvalues $\lambda_A^{ss}, \lambda_A^s, \lambda_A^u$ satisfy $\lambda_A^{ss} < \lambda_A^s < 0 < \lambda_A^u.$ The stable bundle of $A$ is the sum $E_A^{ss} \oplus E_A^{s}.$

In the remarkable work of Baraviera and Bonatti \cite{BB}, with the goal to perturb center Lyapunov exponents, they construct smooth perturbations $\psi: \mathbb{T}^3 \rightarrow \mathbb{T}^3 ,$ which are sufficiently $C^1-$close to the identity such that \begin{itemize}
\item $\psi$ are volume preserving,
\item $\psi$ leave invariant the direction $E^u_A,$
\item $\psi$ act on the direction $E^u_A$ as the identity.
\end{itemize}

Moreover, by the construction in \cite{BB}, the map $f = A \circ \psi: \mathbb{T}^3 \rightarrow \mathbb{T}^3  $ is a smooth and volume preserving Anosov diffeomorphism such that $$ \lambda^{s}_f =  \int_{\mathbb{T}^3} \log(||Df(x)|E^s_f||)dm > \lambda_A^s. $$ Since $\psi$ are smooth perturbations suffciently $C^1-$close to the identity, $A$ and $f$ are conjugated by a conjugacy $h$ and consequently have the same topological entropy. Moreover, $\psi $ act on the direction $E^u_A$ as the identity, then
 $$ \lambda^{u}_f =  \int_{\mathbb{T}^3} \log(||Df(x)|E^u_f||)dm = \lambda^u_A. $$ By applying the Pesin formula
 $$\lambda^u_A = h_m(f),  $$
the volume $m$ is also the measure of maximal entropy for $f.$


Let us we consider a $C^1$ perturbation  $\psi,$ non necessarily $C^1$ close to the identity, requiring only that  $\psi$ acts  as the identity on the direction $E^u_A.$ If $f = A \circ \psi$ is still Anosov, such that $\dim E^u_f = 1,$ our Theorem \ref{t1} allows to conclude that $h_{top}(f) = h_{top}(A).$

The additional part of Theorem \ref{t1} is very easy in the  $C^2-$setting, by applying SRB theory of \cite{Bowen}, for instance. Here we deal with $C^1-$context and the SRB theory doesn't hold in general.

Particularly all algebraic linear  Anosov automorphism of torus or infranilmanifolds satisfies our hypothesis. In this way to find new examples of Anosov diffeomorphisms, modulo topological conjugacy, satisfying the hypothesis of additional part of Theorem \ref{t1} is as hard as to discovering new classes of Anosov diffeomorphisms, modulo topological conjugacy, (if any).

To finalize our comments we mention that M. Brin in \cite{B77} obtained sufficient conditions transitivity by a pinching condition on the uniform expanding and uniform contraction constants, in order to get a global product structure and so $\Omega(f) = M.$

Due to the scarcity of results  guaranteeing  the transitivity of Anosov diffeomorphisms, we believe that the techniques applied to prove Theorem \ref{t1} may be of special interest in the study of the transitivity of Anosov diffeomorphisms.

\section{Useful Technical Preliminaries }

To prove   Theorem \ref{t1} we will need some tools
related with uniform convergence of the limit which defines Lyapunov
exponents. This uniformity is related
with volume growth of the unstable foliation and consequently with the entropy along
the unstable leaves.

Let us recall results  given in \cite{AAS} and \cite{Cao}.

\begin{lemma}\label{lemmauniform2} Let $\mathcal{M}$ be the space of $f-$invariant measures, $\phi$ be a continuous function on $M.$ If $\alpha < \int \phi d\mu < \lambda, \; \forall \mu \in \mathcal{M}, $ then  there exists $N$ such that for all $n \geq N,$ we have
$$\alpha < \frac{1}{n} \sum_{i = 0}^{n - 1} \phi(f^i(x)) < \lambda,$$
 for all $x \in M.$
\end{lemma}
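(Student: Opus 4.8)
The plan is to argue by contradiction, using the compactness of the space of probability measures on $M$ together with the Krylov--Bogolyubov construction of invariant measures from empirical averages. First I would record that, since $M$ is compact, the space $\mathcal{M}$ of $f$-invariant Borel probability measures is compact in the weak-$*$ topology and the functional $\mu \mapsto \int \phi\, d\mu$ is continuous on it; hence the hypothesis $\alpha < \int \phi\, d\mu < \lambda$ for every $\mu \in \mathcal{M}$ upgrades to a uniform gap, namely $\alpha < \inf_{\mu} \int \phi\, d\mu \le \sup_{\mu} \int \phi\, d\mu < \lambda$, with both extrema attained.

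Next, suppose the conclusion fails for the upper bound (the lower bound being symmetric). Then there are sequences $n_k \to \infty$ and points $x_k \in M$ with
$$\frac{1}{n_k} \sum_{i=0}^{n_k - 1} \phi(f^i(x_k)) \ge \lambda.$$
I would form the empirical measures
$$\mu_k = \frac{1}{n_k} \sum_{i=0}^{n_k - 1} \delta_{f^i(x_k)},$$
so that $\int \phi\, d\mu_k \ge \lambda$ for every $k$. By compactness of the space of probability measures on $M$, after passing to a subsequence we may assume $\mu_k \to \mu$ in the weak-$*$ topology.

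I would then verify that $\mu$ is $f$-invariant. For any continuous $\psi$ on $M$,
$$\int \psi \circ f\, d\mu_k - \int \psi\, d\mu_k = \frac{1}{n_k}\bigl(\psi(f^{n_k}(x_k)) - \psi(x_k)\bigr),$$
whose right-hand side tends to $0$ because $\psi$ is bounded on the compact manifold $M$; letting $k \to \infty$ gives $\int \psi \circ f\, d\mu = \int \psi\, d\mu$, so $\mu \in \mathcal{M}$. Finally, weak-$*$ convergence together with continuity of $\phi$ yields $\int \phi\, d\mu = \lim_k \int \phi\, d\mu_k \ge \lambda$, contradicting $\int \phi\, d\mu < \lambda$. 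Applying the same reasoning to $-\phi$ handles the lower bound, and taking the larger of the two resulting thresholds produces a single $N$ valid for all $x \in M$.

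The step I expect to require the most care to phrase cleanly --- though it is entirely standard --- is the Krylov--Bogolyubov verification that the weak-$*$ limit of the (non-invariant) empirical measures $\mu_k$ is genuinely $f$-invariant; everything else reduces to compactness of $\mathcal{M}$ and continuity of $\mu \mapsto \int \phi\, d\mu$. A secondary point worth stating explicitly is that the two one-sided failures are handled by a single contradiction scheme, so that the final $N$ controls both inequalities simultaneously.
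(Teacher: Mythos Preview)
Your argument is correct and is exactly the standard Krylov--Bogolyubov contradiction scheme that the cited references \cite{AAS} and \cite{Cao} use; the paper itself does not give a proof but simply refers to those sources, noting that the lower bound follows symmetrically. There is nothing to add.
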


See  \cite{AAS} or \cite{Cao}  for the proofs of the above Lemma. The version $\int \phi d\mu > \alpha$ is not written in the aforementioned papers, but they take place in an analogous way.

An important tool for us is the notion of topological entropy along an expanding foliation. In \cite{Hua} the authors deal with a notion of topological entropy $h_{top}(f, \mathcal{W} )$ of an invariant expanding foliation $\mathcal{W}$ of a diffeomorphism $f. $ Among other results, they establish variational principle in this sense and a relation between $h_{top}(f, \mathcal{W} )$ and volume growth of $\mathcal{W}. $

Here $W(x)$  denotes the leaf of $\mathcal{W}$ by $x.$ Given  $\delta  > 0,$  we denote by $W(x, \delta)$ the $\delta-$ball centered in $x$ on $W(x),$ with the induced Riemannian distance, which is denoted by $d_{W}.$

Given $x \in M, $ $\varepsilon > 0, $ $\delta > 0$ and $n \geq 1$ a integer number, let $N_{W}(f, \varepsilon, n, x, \delta)$ be the maximal cardinality of a set $S \subset \overline{W(x, \delta)}$ such that $\displaystyle\max_{j =0 \ldots, n-1} d_{W}(f^j(z), f^j(y)) \geq \varepsilon,$ for any distinct points $y,z \in S.$

\begin{definition}\label{uentropy} The unstable entropy of $f$ on $M,$ with respect to the expanding foliation $\mathcal{W}$ is given by
$$h_{top}(f, \mathcal{W} ) = \lim_{\delta \rightarrow 0} \sup_{x \in M} h^{\mathcal{W}}_{top}(f, \overline{W(x, \delta)}), $$
where
$$h^{\mathcal{W}}_{top}(f, \overline{W(x, \delta)}) = \lim_{\varepsilon \rightarrow 0} \limsup_{n \rightarrow +\infty} \frac{1}{n} \log(N_{W}(f, \varepsilon, n, x, \delta)). $$
\end{definition}

Define $\mathcal{W}-$volume growth by
 $$\chi_{\mathcal{W}}(f) = \sup_{x \in M } \chi_{\mathcal{W}}(x, \delta), $$
where
$$ \chi_{\mathcal{W}}(x, \delta) = \limsup_{n\rightarrow +\infty} \frac{1}{n} \log(Vol(f^n(W(x, \delta)))).$$

Note that, since we are supposing $\mathcal{W}$ an expanding foliation, the above definition is independent of $\delta$ and the Riemannian metric.

\begin{theorem}[Theorem C and Corollary C.1 of \cite{Hua}]\label{teoH} With the above notations
$$h_{top}(f, \mathcal{W} ) = \chi_{\mathcal{W}}(f).$$
Moreover, $h_{top}(f) \geq h_{top}(f, \mathcal{W}). $
\end{theorem}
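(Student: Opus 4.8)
The plan is to derive both assertions from the single principle that, for an expanding foliation, the cardinality of a leafwise dynamically separated set, the volume growth of iterated leaf balls, and the global separated-set count are all comparable up to subexponential factors. Three uniform features of an expanding foliation will be used throughout: (i) $f$ expands $d_{W}$ uniformly, so there are $1 < \lambda_0 \le \lambda_1$ with $\lambda_0\, d_{W}(y,z) \le d_{W}(f(y),f(z)) \le \lambda_1\, d_{W}(y,z)$ whenever $y,z$ lie on a common leaf at small leaf distance; (ii) the leaves have uniformly bounded geometry, so the $\dim\mathcal{W}$-volume of an intrinsic ball of radius $r$ in any leaf is pinched between fixed multiples of $r^{\dim\mathcal{W}}$ for $r$ small; and (iii) at small scales the intrinsic leaf distance and the ambient distance are comparable, i.e. there are $\varepsilon_0>0$ and $K\ge 1$ with $d(y,z) \le d_{W}(y,z) \le K\,d(y,z)$ for $y,z$ on a common leaf with $d_{W}(y,z) < \varepsilon_0$.

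I would first establish the two inequalities of the equality. For $h_{top}(f,\mathcal{W}) \le \chi_{\mathcal{W}}(f)$, take a maximal $(n,\varepsilon)$-separated set $S \subset \overline{W(x,\delta)}$. Using (i), once two orbits separate by $\varepsilon$ in leaf distance they keep separating, so the separation is essentially realized at the last time: the points of $f^{n}(S)$ are pairwise at leaf distance at least $\varepsilon/\lambda_1$ inside $f^{n}(\overline{W(x,\delta)})$. By (ii) the intrinsic balls of radius $\varepsilon/(2\lambda_1)$ about these points are disjoint with volume $\gtrsim (\varepsilon/\lambda_1)^{\dim\mathcal{W}}$, whence
$$N_{W}(f,\varepsilon,n,x,\delta) \;\le\; c\,\varepsilon^{-\dim\mathcal{W}}\,\mathrm{Vol}\bigl(f^{n}(W(x,\delta))\bigr).$$
Applying $\tfrac1n\log$, then $n\to\infty$ and $\varepsilon\to0$, the $\varepsilon$-factor disappears and one gets $h^{\mathcal{W}}_{top}(f,\overline{W(x,\delta)}) \le \chi_{\mathcal{W}}(x,\delta)$, hence the inequality after the supremum over $x$. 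For the reverse $\chi_{\mathcal{W}}(f) \le h_{top}(f,\mathcal{W})$, maximality makes $S$ also $(n,\varepsilon)$-spanning, so the leafwise Bowen balls $B^n_{W}(y,\varepsilon)=\{z: d_{W}(f^jz,f^jy)<\varepsilon,\ 0\le j<n\}$, $y\in S$, cover $\overline{W(x,\delta)}$. Since $f^{n}$ sends each such Bowen ball onto a set of leaf diameter $<\varepsilon$, by (ii) its volume is at most a constant $C(\varepsilon)$ independent of $n$; summing,
$$\mathrm{Vol}\bigl(f^{n}(W(x,\delta))\bigr) \;\le\; N_{W}(f,\varepsilon,n,x,\delta)\cdot C(\varepsilon),$$
and again $\tfrac1n\log$ with $n\to\infty$ gives $\chi_{\mathcal{W}}(x,\delta) \le h^{\mathcal{W}}_{top}(f,\overline{W(x,\delta)})$. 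Combining yields the claimed equality.

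For the moreover part, $h_{top}(f)\ge h_{top}(f,\mathcal{W})$, I would show that a leafwise separated set is, after a harmless rescaling of $\varepsilon$, globally separated. Given distinct $y,z$ in an $(n,\varepsilon)$-separated set $S\subset\overline{W(x,\delta)}$ with $\varepsilon<\varepsilon_0$, let $j_0$ be the first time $d_{W}(f^{j}y,f^{j}z)\ge\varepsilon$; by (i) this leaf distance is then at most $\lambda_1\varepsilon$, still below $\varepsilon_0$ for $\varepsilon$ small, so (iii) gives $d(f^{j_0}y,f^{j_0}z)\ge \varepsilon/K$. Thus $S$ is globally $(n,\varepsilon/K)$-separated and $N_{W}(f,\varepsilon,n,x,\delta)\le s_n(f,\varepsilon/K)$, the maximal global $(n,\varepsilon/K)$-separated cardinality in $M$. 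Passing to $\tfrac1n\log$, then $n\to\infty$ and $\varepsilon\to0$ (so $\varepsilon/K\to0$), yields $h^{\mathcal{W}}_{top}(f,\overline{W(x,\delta)})\le h_{top}(f)$, and then $h_{top}(f,\mathcal{W})\le h_{top}(f)$ after the supremum over $x$ and the limit $\delta\to0$.

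The main obstacle is the $n$-independent volume bound $C(\varepsilon)$ for the image $f^{n}(B^n_{W}(y,\varepsilon))$ used in the reverse inequality of the equality: controlling that a leafwise Bowen ball, although exponentially thin under $f^n$, maps to a set of uniformly bounded volume requires genuine distortion control of $f^n$ along leaves, which is delicate in the $C^1$ setting where leaves are only $C^1$ and the foliation merely H\"older. I expect this to be the technical heart of \cite{Hua}. I would address it by extracting the bounded-geometry constants of (i)--(ii) uniformly in the base point, and by checking that every estimate above is uniform in $x$, so that the suprema over $x$ and the limit $\delta\to0$ in the definitions of $\chi_{\mathcal{W}}(f)$ and $h_{top}(f,\mathcal{W})$ can be taken without degrading the comparisons.
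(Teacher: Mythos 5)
This statement is not proved in the paper at all: the author imports it verbatim as Theorem C and Corollary C.1 of \cite{Hua} and uses it as a black box, so there is no internal proof to compare your attempt against; the relevant comparison is with the argument in \cite{Hua} itself. Your outline is essentially that standard argument, and it is sound: uniform leafwise expansion makes separation persist forward in time, so a maximal leafwise $(n,\varepsilon)$-separated set yields pairwise disjoint intrinsic balls of definite volume inside the iterated plaque (giving $h_{top}(f,\mathcal{W})\le\chi_{\mathcal{W}}(f)$); maximality makes the same set $(n,\varepsilon)$-spanning, so the iterated plaque is covered by images of leafwise Bowen balls of uniformly bounded volume (giving the reverse inequality); and local comparability of $d_{W}$ with the ambient metric converts leafwise separated sets into globally separated ones (giving $h_{top}(f)\ge h_{top}(f,\mathcal{W})$).

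Three points should be repaired or reconsidered. First, the step you single out at the end as ``the technical heart,'' namely the $n$-independent bound $C(\varepsilon)$ on $\mathrm{Vol}\bigl(f^{n}(B^n_{W}(y,\varepsilon))\bigr)$, requires no distortion control whatsoever: since $d_{W}(f^{n-1}z,f^{n-1}y)<\varepsilon$ on the Bowen ball and one further iterate stretches intrinsic distances by at most $\lambda_1$, the image $f^{n}(B^n_{W}(y,\varepsilon))$ lies inside the single leaf ball of radius $\lambda_1\varepsilon$ about $f^{n}(y)$, whose volume is bounded by your bounded-geometry hypothesis (ii). That is exactly the soft argument you already wrote two sentences earlier, so your closing worry contradicts your own (correct) proof and is unfounded; this is why the estimate survives in the $C^1$ setting. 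Second, in the ``moreover'' part, if two points of $S$ are already $\varepsilon$-separated at time $j_0=0$, their leaf distance can be as large as $2\delta$ and your comparability hypothesis (iii) need not apply; you must take $\delta<\varepsilon_0/2$, which is harmless because the definition takes $\delta\to 0$. Third, in the first inequality, the disjoint balls around the points of $f^{n}(S)$ need not be contained in $f^{n}(W(x,\delta))$: they are contained in $f^{n}(W(x,\delta+\varepsilon/2))$, and one then invokes the independence of the volume growth on $\delta$ (noted in the paper right after its definition) to close the argument. With these repairs your proposal is a correct sketch, following the same route as the cited source.
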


\section{Proof of Theorem \ref{t1}}

Consider $X$  a compact metric space and $f: X \rightarrow X$ a continuous map. The first fact that we observe is that if $f$ has a unique measure of maximal entropy, $\mu,$ then it is ergodic. This fact is a consequence of Jacobs Theorem.

\begin{lemma}[Jacobs Theorem] Suppose that $X$ is a completely separable metric
space and $f: X \rightarrow X$ a continuous function. Given any Borel invariant probability measure $\mu$ let $\{\mu_P: P \in \mathcal{P}\}$ be its
ergodic decomposition. Then, $$h_{\mu}(f) = \displaystyle\int_M h_{\mu_P}(f)d\hat{\mu}(P), $$ if one side is infinite,
so is the other side.
\end{lemma}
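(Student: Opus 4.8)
The plan is to reduce the identity to a fixed finite partition and then take a supremum. Recall that $h_\mu(f)=\sup_{\mathcal A}h_\mu(f,\mathcal A)$ over finite measurable partitions, where $h_\mu(f,\mathcal A)=\lim_{n}\frac1n H_\mu(\mathcal A_0^{n-1})$ and $\mathcal A_0^{n-1}=\bigvee_{i=0}^{n-1}f^{-i}\mathcal A$, the same formula holding for each $\mu_P$. I would first record that the ergodic decomposition $\{\mu_P:P\in\mathcal P\}$ is exactly the disintegration of $\mu$ over the $\sigma$-algebra $\mathcal I$ of $f$-invariant Borel sets, so that $\mu=\int\mu_P\,d\hat\mu(P)$ and $x\mapsto\mu_{P(x)}(B)$ is a version of the conditional expectation $E_\mu[\mathbf 1_B\mid\mathcal I]$.

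The heart of the matter is the fixed-partition identity $h_\mu(f,\mathcal A)=\int h_{\mu_P}(f,\mathcal A)\,d\hat\mu(P)$. I would begin with the purely measure-theoretic formula $H_\mu(\mathcal Q\mid\mathcal I)=\int H_{\mu_P}(\mathcal Q)\,d\hat\mu(P)$, valid for every finite partition $\mathcal Q$: this is immediate from the disintegration, since the conditional information of $\mathcal Q$ given $\mathcal I$ at the point $x$ equals $-\log\mu_{P(x)}(\mathcal Q(x))$, where $\mathcal Q(x)$ is the atom containing $x$, and integrating in $x$ and pushing forward to $\hat\mu$ gives the right-hand side. Applying this with $\mathcal Q=\mathcal A_0^{n-1}$, dividing by $n$ and letting $n\to\infty$, the integrand $\frac1n H_{\mu_P}(\mathcal A_0^{n-1})$ is bounded by $\log\#\mathcal A$ and decreases to $h_{\mu_P}(f,\mathcal A)$ by subadditivity, so dominated convergence yields $\lim_n\frac1n H_\mu(\mathcal A_0^{n-1}\mid\mathcal I)=\int h_{\mu_P}(f,\mathcal A)\,d\hat\mu(P)$.

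It remains to identify this left-hand limit with $h_\mu(f,\mathcal A)$, and this is the step I expect to be the main obstacle. Since conditioning cannot increase entropy, $\lim_n\frac1n H_\mu(\mathcal A_0^{n-1}\mid\mathcal I)\le h_\mu(f,\mathcal A)$ is automatic; the content is the reverse inequality, equivalently that the excess $H_\mu(\mathcal A_0^{n-1})-H_\mu(\mathcal A_0^{n-1}\mid\mathcal I)$ is $o(n)$. The reason this should hold is that $f$ fixes every set of $\mathcal I$ pointwise in the measure algebra, so $\mathcal I$ carries no dynamical entropy and the extra conditioning is free in the rate. I would make this precise either by a chain-rule computation expressing both rates as conditional entropies of $\mathcal A$ given the infinite past and checking that the extra conditioning on $\mathcal I$ leaves them unchanged, or, more cleanly, by invoking the non-ergodic Shannon--McMillan--Breiman theorem, which gives $-\frac1n\log\mu_{\mathcal A_0^{n-1}}(x)\to h_{\mu_{P(x)}}(f,\mathcal A)$ both $\mu$-almost everywhere and in $L^1(\mu)$; integrating the $L^1$ convergence produces the fixed-partition identity at once.

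Finally I would pass to the full entropy. As $X$ is completely separable, I fix one increasing sequence of finite partitions $\mathcal A_k$ generating the Borel $\sigma$-algebra; by the Kolmogorov--Sinai theorem $h_\mu(f,\mathcal A_k)\uparrow h_\mu(f)$ and, simultaneously, $h_{\mu_P}(f,\mathcal A_k)\uparrow h_{\mu_P}(f)$ for $\hat\mu$-almost every $P$, the same sequence serving every measure because generation is a Borel, measure-independent property. Applying the fixed-partition identity to each $\mathcal A_k$ and letting $k\to\infty$, the monotone convergence theorem gives $h_\mu(f)=\int h_{\mu_P}(f)\,d\hat\mu(P)$ and, by the same token, forces one side to be infinite precisely when the other is, which is the final clause of the statement.
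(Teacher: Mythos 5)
The first thing to say is that the paper itself contains no proof of this lemma: it is quoted as a known result and the reader is referred to \cite{OV}, so there is no in-paper argument to match yours against. Judged on its own merits, your outline is a correct and standard route, but it is a genuinely different one from the proof in \cite{OV}: there the fixed-partition identity is obtained from the \emph{affinity} of $\mu\mapsto h_\mu(f,\mathcal{A})$ (the concavity defect of $H_\mu$ is bounded by $\log 2$ uniformly in the partition, hence vanishes in the rate) together with an approximation of the ergodic decomposition by finite convex combinations, whereas you go through conditional entropy relative to the invariant $\sigma$-algebra $\mathcal{I}$, equivalently the non-ergodic Shannon--McMillan--Breiman theorem. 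Your closing step (a single measure-independent refining sequence of finite Borel partitions, Kolmogorov--Sinai for every invariant measure simultaneously, then monotone convergence, which also settles the infinite case) is correct and is essentially how any proof must finish.

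The one place where your argument is thinner than it should be is exactly the step you flag: $\lim_n\frac1n H_\mu(\mathcal{A}_0^{n-1}\mid\mathcal{I})=h_\mu(f,\mathcal{A})$. Your first suggestion (identify both rates as conditional entropies given the infinite past) amounts to the identity $H_\mu(\mathcal{A}\mid\mathcal{A}_1^\infty)=H_\mu(\mathcal{A}\mid\mathcal{A}_1^\infty\vee\mathcal{I})$, which is true but not obvious, and the naive interchange of ``$\inf$ over finite $\mathcal{Q}\subset\mathcal{I}$'' with ``$\lim$ in $n$'' goes in the wrong direction. Here is how to close it with the tools you already have: fix $n$ and $\varepsilon>0$, and choose a finite partition $\mathcal{Q}\subset\mathcal{I}$ with $H_\mu(\mathcal{A}_0^{n-1}\mid\mathcal{Q})\le H_\mu(\mathcal{A}_0^{n-1}\mid\mathcal{I})+\varepsilon$ (martingale convergence); since each atom of $\mathcal{Q}$ is invariant mod $\mu$ and $\mu$ is $f$-invariant, for every $k$ one has
$$H_\mu\bigl(\mathcal{A}_0^{kn-1}\bigr)\le H_\mu(\mathcal{Q})+H_\mu\bigl(\mathcal{A}_0^{kn-1}\mid\mathcal{Q}\bigr)\le H_\mu(\mathcal{Q})+\sum_{j=0}^{k-1}H_\mu\bigl(f^{-jn}\mathcal{A}_0^{n-1}\mid\mathcal{Q}\bigr)=H_\mu(\mathcal{Q})+k\,H_\mu\bigl(\mathcal{A}_0^{n-1}\mid\mathcal{Q}\bigr);$$
dividing by $kn$ and letting $k\to\infty$ gives $h_\mu(f,\mathcal{A})\le\frac1n H_\mu(\mathcal{A}_0^{n-1}\mid\mathcal{I})+\frac{\varepsilon}{n}$ for every $n$, and with the trivial reverse inequality this yields the identity. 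Your alternative of quoting the non-ergodic Shannon--McMillan--Breiman theorem in its $L^1$ form, with the limit identified as $h_{\mu_{P(x)}}(f,\mathcal{A})$, is also legitimate and not circular (that theorem is proved independently of Jacobs' theorem), but you should be aware that you are then invoking a result at least as deep as the one you are proving; the block argument above, or the affinity route of \cite{OV}, is more economical.
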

For a proof, we refer \cite{OV}.

When the topological entropy is finite, by Variational Principle, we observe that in the case where $\mu$ is the unique measure of maximal entropy, then, necessarily, it is an ergodic component.

The proof of Theorem \ref{t1} is a consequence of two lemmas.  Denote  $J^uf(x) = |\det Df(x): E^u_f(x)  \rightarrow E^u_f(f(x))| .$

\begin{lemma}\label{lem3-rafael}
Let $f: M \rightarrow M$ be a $C^1$ Anosov diffeomorphism and $\Lambda^u_f$ the common value $\Lambda^u_f(p), p \in Per(f).$ Then for any $x \in M ,$ holds $\Lambda^u_f  = \displaystyle\lim_{n \rightarrow +\infty} \frac{1}{n} \log(J^uf^n(x))$ and such limit is uniform on $M,$ particularly $h_{top}(f) = \Lambda^u_f.$
\end{lemma}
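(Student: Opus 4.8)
The plan is to reduce everything to the single continuous function $\phi(x) := \log J^uf(x)$, which is continuous precisely because $f$ is $C^1$ and $E^u_f$ is a continuous bundle. The key structural fact I will use throughout is the cocycle identity $\frac{1}{n}\log J^uf^n(x) = \frac{1}{n}\sum_{i=0}^{n-1}\phi(f^i(x))$, valid since $Df^n(x)|_{E^u_f}$ factors as a composition of the fiberwise derivatives and hence $J^uf^n(x)=\prod_{i=0}^{n-1}J^uf(f^i(x))$. The entire argument rests on one claim: that $\int \phi\, d\mu = \Lambda^u_f$ for \emph{every} $f$-invariant probability measure $\mu$. Granting this, both the uniform convergence and the entropy identity follow quickly from Lemma \ref{lemmauniform2} and Theorem \ref{teoH}.

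First I would establish this key claim. For a periodic point $p$ of period $k$, the orbit measure $\mu_p = \frac{1}{k}\sum_{i=0}^{k-1}\delta_{f^i(p)}$ satisfies $\int\phi\,d\mu_p = \frac{1}{k}\log J^uf^k(p)$; since the moduli of the eigenvalues of $Df^k(p)|_{E^u_f(p)}$ are exactly the exponentials of $k$ times the positive Lyapunov exponents at $p$, this quantity equals $\Lambda^u_f(p)$, which by hypothesis is the common value $\Lambda^u_f$. Thus $\int\phi\,d\mu_p = \Lambda^u_f$ for every periodic $p$. Now any ergodic $\mu$ is supported on a single basic set of $\Omega(f)$, and on each basic set periodic orbit measures are weak-$*$ dense in the invariant measures (Sigmund's theorem for Axiom A, resting on the closing/shadowing structure). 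Since $\phi$ is continuous, the functional $\mu\mapsto\int\phi\,d\mu$ is weak-$*$ continuous, so $\int\phi\,d\mu=\Lambda^u_f$ for every ergodic $\mu$, and hence, by affineness of this functional together with the ergodic decomposition, for every invariant $\mu$.

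With the claim in hand the uniform convergence is immediate. Fix $\eps>0$; then $\Lambda^u_f-\eps<\int\phi\,d\mu<\Lambda^u_f+\eps$ for all $\mu\in\mathcal M$, so Lemma \ref{lemmauniform2} provides $N$ with $\Lambda^u_f-\eps<\frac{1}{n}\sum_{i=0}^{n-1}\phi(f^i(x))<\Lambda^u_f+\eps$ for all $n\ge N$ and all $x\in M$. By the cocycle identity this is exactly uniform convergence $\frac{1}{n}\log J^uf^n(x)\to\Lambda^u_f$ on $M$, which gives the first two assertions of the lemma.

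Finally, for the entropy I would prove both inequalities. For the lower bound, the area formula gives $\vol(f^n(W^u_f(x,\delta)))=\int_{W^u_f(x,\delta)}J^uf^n(y)\,dy$, and the uniform estimate above sandwiches this between $e^{n(\Lambda^u_f-\eps)}\vol(W^u_f(x,\delta))$ and $e^{n(\Lambda^u_f+\eps)}\vol(W^u_f(x,\delta))$; hence $\chi_{\mathcal F^u_f}(f)=\Lambda^u_f$, and Theorem \ref{teoH} yields $h_{top}(f)\ge h_{top}(f,\mathcal F^u_f)=\Lambda^u_f$. For the upper bound, Ruelle's inequality (valid in the $C^1$ setting) gives $h_\mu(f)\le\int\phi\,d\mu=\Lambda^u_f$ for every invariant $\mu$, so the variational principle forces $h_{top}(f)\le\Lambda^u_f$; the two bounds give $h_{top}(f)=\Lambda^u_f$. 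I expect the main obstacle to be the key claim: it is precisely there that the pointwise hypothesis on periodic orbits must be globalized to all invariant measures, and this step depends essentially on the density of periodic measures, i.e.\ on the closing and shadowing properties of Anosov systems.
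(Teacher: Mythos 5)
Your proposal is correct, and its overall architecture is the same as the paper's: both arguments reduce the lemma to the single claim that $\int \log J^uf \, d\mu = \Lambda^u_f$ for \emph{every} $f$-invariant probability measure $\mu$, then obtain uniform convergence from Lemma \ref{lemmauniform2}, the lower entropy bound from the volume-growth estimate together with Theorem \ref{teoH}, and the upper bound from Ruelle's inequality (valid in the $C^1$ setting, as you note) plus the variational principle. Where you genuinely diverge is in how the key claim is established. The paper works pointwise: for a regular recurrent point $x$ (a set of full measure for any invariant measure), it applies the Anosov closing lemma to the return times of $x$, compares the Jacobian products $\prod_{j=0}^{n_k-1} J^uf(f^j(x))$ and $\prod_{j=0}^{n_k-1} J^uf(f^j(p_k))$ along the shadowing periodic orbits $p_k$ using uniform continuity of $J^uf$, and concludes that $\lim_{n\to\infty}\frac{1}{n}\log J^uf^n(x) = \Lambda^u_f$ at every such $x$; integrating via Birkhoff then gives the claim measure by measure. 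You instead globalize at the level of measures: periodic orbit measures integrate $\phi$ to $\Lambda^u_f$, they are weak-$*$ dense in the invariant measures of each basic set by Sigmund's theorem, the functional $\mu \mapsto \int \phi \, d\mu$ is weak-$*$ continuous, and the ergodic decomposition finishes. Both routes ultimately rest on closing/shadowing, but yours outsources that to Sigmund's theorem (together with the spectral decomposition, needed to place each ergodic measure on a single basic set), which makes the write-up shorter at the cost of importing heavier, uncited-in-the-paper machinery; the paper's hands-on computation is self-contained modulo the closing lemma, stays entirely inside the tools the paper actually quotes, and directly yields the pointwise identity at recurrent regular points (which in your version is only recovered at the end, once Lemma \ref{lemmauniform2} upgrades the integral identity to uniform convergence on all of $M$). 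Both proofs are complete and correct.
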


\begin{proof} Let $\mu$ be an $f-$invariant probability measure, and denote by $R$ the set of regular and recurrent points of $f,$ we have $\mu(R)  = 1.$

We can use  Anosov Closing Lemma to get
\begin{equation}
\Lambda^u_f(x):= \lim_{n \rightarrow + \infty} \frac{1}{n} \log(J^uf^n(x)) = \Lambda^u_f, \label{sameu}
\end{equation}
for any $x \in R.$ In fact, consider $\varepsilon > 0$ arbitrary. Since $f$ is  $C^1$ take $\delta > 0$ such that $1 - \varepsilon < \frac{J^uf(x)}{J^uf(y)} < 1 + \varepsilon,$ if $d(x,y) < \delta.$ Using that $x \in R,$ there is a sequence of integers $n_k, k =1,2,\ldots$ such that $d(x, f^{n_k}(x)) < \delta',$ for some $\delta' > 0$  such that every $\delta'-$pseudo orbit is indeed $\delta$ shadowed by a periodic orbit of a point $p_k$ such that $f^{n_k}(p_k) = p_k.$ So
$$ \frac{J^uf^{n_k}(x) }{J^uf^{n_k}(p_k)} = \frac{\prod_{j=0}^{n_k -1} J^uf (f^j(x))}{\prod_{j=0}^{n_k -1} J^uf (f^j(p_k))} \Rightarrow  (1 - \varepsilon)^{n_k} <\frac{J^uf^{n_k}(x) }{(\Lambda^u_f)^{n_k}} < (1+ \varepsilon)^{n_k}. $$
We get $ \Lambda^u_f + \log(1 - \varepsilon) \leq  \Lambda^u_f(x) = \displaystyle\lim_{k \rightarrow +\infty}\frac{1}{n_k} \log(J^uf^{n_k} (x)) \leq \Lambda^u_f + \log(1 + \varepsilon),$ so taking $\varepsilon $ going to zero, we obtain $\Lambda^u_f(x) = \Lambda^u_f,$ for any $x \in R.$

By Ruelle's inequality we have $h_{\mu}(f) \leq \Lambda^u_f,$ and using the Variational Principle, we get

\begin{equation}\label{topleq}
h_{top}(f) \leq \Lambda^u_f.
\end{equation}

 Since the identity $(\ref{sameu})$ holds on a full probability set,  using  Lemma \ref{lemmauniform2}  and the Birkhoff Ergodic Theorem with $\phi(x) = \log(J^uf(x)), x \in M,$ we conclude that the limit given in the expression \eqref{sameu} is uniform. So for any $\varepsilon > 0,$ there is $N > 0$ an integer number such that for any $n \geq N$ and $x \in M,$ we have
\begin{equation}
J^uf^n(x) > e^{n(\Lambda^u_f - \varepsilon)}.
\end{equation}
So given $B^u(x, \delta)$ a $u-$ball centered in $x$ with radius $\delta  >0,$ we have
\begin{equation}\label{growthu}
Vol_u(f^n(B^u(x, \delta))) = \int_{B^u(x, \delta)}  J^uf^n(x) dVol_u(x) > e^{n(\Lambda^u_f - \varepsilon)}Vol_u(B^u(x, \delta)),
\end{equation}
where $Vol_u$ denotes the $u-$dimension volume along unstable leaves induced by the Riemannian metric of $M.$

By  equation \eqref{growthu} we get $\chi_{\mathcal{F}^u}(f) \geq \Lambda^u_f - \varepsilon, $ for any $\varepsilon > 0.$ From Theorem \ref{teoH} we have $h_{top}(f, \mathcal{F}^u) \geq \Lambda^u_f,$ and
\begin{equation} \label{topgeq}
h_{top}(f)\geq \Lambda^u_f.
 \end{equation}

Now the equations \eqref{topleq} and \eqref{topgeq}, we get $h_{top}(f) = \Lambda^u_f. $

\end{proof}

\begin{lemma}\label{fudamental}
Let $f: M \rightarrow M$ be a $C^1-$ Anosov diffeomorphism such that $p \mapsto \Lambda^u_f(p)$ is constant on $Per(f).$ If $f$ has a unique measure of maximal entropy $\mu$ and it is absolutely continuous with respect to $m,$  such that the density $\rho =  \frac{d\mu}{dm}$ is continuous, then $\mu(U) > 0,$ for any non empty open set $U \subset M,$ consequently $f$ is transitive.
\end{lemma}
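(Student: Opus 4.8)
The plan is to reduce the whole statement to a single structural fact about Anosov diffeomorphisms, namely that a closed $f$-invariant set with nonempty interior must be all of $M$. First I would record what the hypotheses on $\mu$ buy us. Writing $A=\{x\in M:\rho(x)>0\}$, continuity of $\rho$ makes $A$ open, and $A$ is nonempty because $\mu$ is a probability measure, so $\rho\not\equiv 0$. Since $\rho=0$ off $A$ we have $\mu(M\setminus A)=\int_{M\setminus A}\rho\,dm=0$, while for every $x\in A$ the density is positive on a whole neighborhood, forcing that neighborhood to have positive $\mu$-measure; hence $A\subseteq\operatorname{supp}(\mu)$. Consequently $S:=\operatorname{supp}(\mu)$ is a closed, $f$-invariant set (the support of an invariant measure is invariant) with nonempty interior. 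This is exactly the point where the continuity of the density is used: mere absolute continuity would not produce an open set inside the support, which is why the paper cannot drop this hypothesis without the conclusion becoming trivial.

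Second, I would isolate and prove the key fact: \emph{if $K\subseteq M$ is closed, $f$-invariant and $\operatorname{int}(K)\neq\emptyset$, then $K=M$.} Let $O=\operatorname{int}(K)$, which is open, nonempty and $f$-invariant (as $f$ is a homeomorphism and $K$ is invariant). The strategy is to show that $O$ is saturated by the stable and unstable manifolds of its points and then that $O$ is closed; connectedness of $M$ would then give $O=M$, hence $K=M$. For saturation, fix $x\in O$ and a product box $P\subseteq O$ around $x$ foliated by local unstable plaques, with centers $a$ ranging over a small stable disk through $x$. Because $K$ is invariant, $f^n(P)\subseteq K$ for all $n\ge 0$, and since $f^n(P)$ is open it in fact lies in $O$; using invariance of $O$ together with the uniform expansion and contraction of $\mathcal F^u_f$, one then argues that every point of the leaf $W^u_f(a)$ lies in $O$, in particular $W^u_f(x)\subseteq O$, and symmetrically (running the argument with $f^{-1}$) that $W^s_f(x)\subseteq O$. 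Finally, to see that $O$ is closed, take $z\in\overline{O}$ and $x_k\in O$ with $x_k\to z$; for $k$ large $z$ sits inside the uniform local product box around $x_k$, so $z$ is the intersection of a local stable manifold of a point of $W^u_{f,loc}(x_k)$ with a local unstable manifold, and the $s$- and $u$-saturation of $O$ places $z$ inside $O$.

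Granting this fact I apply it to $S=\operatorname{supp}(\mu)$, obtaining $S=M$. Then every nonempty open set $U$ meets the support, so $\mu(U)>0$, which is the first assertion of the lemma. For transitivity I would use that $\mu$ is ergodic (it is the unique measure of maximal entropy, hence an ergodic component by the Jacobs theorem discussion recalled above) together with $\operatorname{supp}(\mu)=M$: fixing a countable basis $\{U_j\}$ of the topology, Birkhoff's ergodic theorem applied to the indicator $\mathbf 1_{U_j}$ shows that for $\mu$-almost every $x$ the frequency of visits of the orbit of $x$ to each $U_j$ equals $\mu(U_j)>0$; intersecting these full-measure sets over $j$ produces a point whose orbit meets every $U_j$, i.e. a dense orbit, so $f$ is transitive.

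The main obstacle is the structural fact in the second step, and inside it the verification that $O=\operatorname{int}(K)$ is genuinely saturated by entire unstable (and stable) leaves. The delicate issue is that the radius of a ball of $K$ around a point may shrink as the point is dragged backward by $f^{-n}$, so one cannot naively propagate a fixed-size neighborhood along a leaf by pure invariance. It is precisely the uniform hyperbolic expansion of $\mathcal F^u_f$, together with the uniform local product structure of the Anosov splitting, that must be invoked to beat this shrinking, to justify that the local plaques exhaust the global leaf, and to close up $O$. This is the only place where the Anosov hypothesis, rather than soft measure-theoretic reasoning, is essential; everything else is bookkeeping around the continuity of $\rho$ and ergodicity of $\mu$.
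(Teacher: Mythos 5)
The heart of your proposal is the structural claim: \emph{every closed $f$-invariant set $K$ with nonempty interior equals $M$}. This claim is exactly where the argument breaks, and the breakdown occurs at the step you yourself flag, the $u$-saturation of $O=\operatorname{int}(K)$. Invariance of $O$ only transports your product box along the orbit of its center: it gives large unstable plaques around $f^n(x)$, not around $x$, and the statement ``$f^n(P)\subseteq O$'' contains no information about points $y\in W^u_f(x)$ outside $P$. To place such a $y$ in $O$ you must iterate backward until $f^{-n}(y)$ falls inside $O$ near $f^{-n}(x)$; unstable distances contract under $f^{-n}$ at rate roughly $\eta^{-n}$, but the only lower bound invariance gives for the inradius of $O$ at $f^{-n}(x)$ is $d(x,\partial O)\cdot\|Df\|_\infty^{-n}$ (the boundary $\partial O$ is also invariant), and since $\|Df\|_\infty\geq\eta$ always, the ratio $(\|Df\|_\infty/\eta)^n$ does not tend to zero. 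So ``uniform expansion beats the shrinking'' is false in general; it only works under a pinching condition of the type Brin \cite{B77} imposes, which is precisely the kind of extra hypothesis this paper is trying to avoid. The missing ingredient is \emph{recurrence} of interior points: if the interior of $K$ consists of wandering points, the growing plaques around $f^n(x)$ never return near $x$ and cannot be used to fill in $W^u_f(x)$. Note also that your claim is trivially true for transitive Anosov diffeomorphisms and is precisely what is at stake for a hypothetical non-transitive one, so asserting it for arbitrary closed invariant sets essentially begs the question the paper is addressing; tellingly, your argument never uses the hypothesis that $p\mapsto\Lambda^u_f(p)$ is constant on $Per(f)$, which is a signal that something must fail.

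There is, however, a genuine repair close to your outline: you do establish that $\mu$ is ergodic (unique MME), and an ergodic measure is transitive on its support; hence $S=\operatorname{supp}(\mu)$ is a \emph{transitive} hyperbolic set with nonempty interior, and a theorem of T.~Fisher (``Hyperbolic sets with nonempty interior'') then gives $S=M$, after which your final Birkhoff argument finishes the proof. That corrected route is genuinely different from the paper's and would not need the Lyapunov-exponent hypothesis at all, but it rests on an external theorem whose proof uses exactly the recurrence mechanism your sketch lacks. The paper's own proof avoids saturation entirely: assuming some open $U$ has $\mu(U)=0$, it traps $U$ inside the closed invariant set $\Lambda=\bigcap_n f^{-n}(H_0)$ with $H_0=\rho^{-1}(\{0\})$, takes a single unstable arc in $U$, and uses the uniform volume growth of Lemma \ref{lem3-rafael} (this is where constancy of $\Lambda^u_f$ on $Per(f)$ enters) together with Theorem \ref{teoH} and expansiveness of $f_{|\Lambda}$ to manufacture a measure of maximal entropy supported in $\Lambda$, which is mutually singular with $\mu$ and so contradicts uniqueness of the MME. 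You should either adopt that entropy argument or invoke Fisher's theorem explicitly; as written, the key step of your proof is an unproved assertion.
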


\begin{proof} As in the previous Lemma, consider $\Lambda^u_f$ the common value $\Lambda^u_f(p), p \in Per(f).$ Let $\rho$ be the continuous density of $\mu$ and consider the compact set $H_0 = \rho^{-1}(\{ 0\}),$ in particular $\mu(H_0)=0.$ Suppose that there exists a non empty open set $U \subset M,$ such that $\mu(U) = 0,$ so $U \subset H_0.$ In fact, if there was some $x \in U,$ such that $\rho(x) > 0,$ then by continuity of $\rho,$ we could choose an open set $V \subset U,$ such that $x \in V$ and $\rho(t) > \delta > 0,$ for some $\delta > 0$ and every $t \in V.$ With this $0< \mu(V) < \mu(U),$ that contradicts the fact $\mu(U) = 0.$

By invariance of $\mu,$ given $n \in \mathbb{Z},$ we have $f^n(U)$ is open set and $\mu(f^n(U)) = 0,$ so as before $f^n(U) \subset H_0.$ We conclude that $U \subset f^{-n}(H_0),$ for any $n \in \mathbb{Z}.$

Define $\Lambda = \displaystyle\bigcap_{n \in \mathbb{Z}} f^{-n}(H_0),$ we note that $U \subset \Lambda,$ and $\Lambda$ is a non empty and compact hyperbolic set invariant by $f.$

Since $U \subset \Lambda,$ we can take a local unstable arc $W$ of $W^u_f(x)$ fully contained in $U \subset \Lambda.$ Since $\Lambda$ contains an open unstable arc $W,$ proceeding as in Lemma \ref{lem3-rafael}   we have
$ h_{top}(f_{|\Lambda}, \mathcal{W}^u_{f_{|\Lambda}}) \geq \Lambda^u_f ,$ and by Theorem \ref{teoH} we get
$$ h_{top}(f_{|\Lambda}) \geq h_{top}(f_{|\Lambda}, \mathcal{W}^u_{f_{|\Lambda}}) \geq \Lambda^u_f = h_{top}(f) \geq h_{top}(f_{|\Lambda}).$$

So $h_{top}(f_{|\Lambda}) =  h_{top}(f),$ since $f_{|\Lambda}$ is expansive, then $f_{|\Lambda}$ admits a measure of maximal entropy $\nu,$ such that $\nu(\Lambda) = 1.$ Define $\overline{\nu}$ a Borel measure such that $\overline{\nu}(B) = \nu(B \cap \Lambda).$ Note that $\mu (\Lambda) = 0 = \overline{\nu}(M\setminus \Lambda).$  So $\mu$ and $\bar{\nu}$  are mutually singular measures of maximal entropy of $f.$ It is a contradiction with the assumption of $f$ having a unique measure of maximal entropy.

Since  $\mu$ is ergodic, for any non empty open sets $U , V \subset M,$ we have $\mu(U), \mu(V) > 0.$ By ergodicity, for $\mu$ a.e $x \in U,$
$$ \displaystyle\lim_{n \rightarrow +\infty}\frac{1}{n} \displaystyle\sum_{j=1}^{n-1} \chi_{V} (f^j(x)) = \mu(V) > 0,$$
so $f^j(x) \in V,$ for infinitely many $j > 0,$ thus  $f$ is transitive.

\end{proof}

\section{Further Questions}

\begin{question}
Let $f: M \rightarrow M$ be an Anosov diffeomorphism. If $p \mapsto \Lambda^u_f(p), p \in Per(f)$ is constant. Is $f$ transitive?
\end{question}

\begin{question}
Let $f: M \rightarrow M$ be an Anosov diffeomorphism. If $Jf^n(p) = 1$, for any  $p$ such that $f^n(p) = p,$ for some integer $n \geq 1,$ is $f$ transitive? Here $Jf(x)$ denotes the jacobian of $f$ at $x.$
\end{question}

\end{document}